\newtheorem{theor}{Theorem}
\theoremstyle{definition}
\newtheorem{define}{Definition}
\newtheorem{condition}{Condition}
\newtheorem{ex}{Example}
\newtheorem{counterexample}[ex]{Counterexample}
\theoremstyle{remark}
\newtheorem{rem}{Remark}
\theoremstyle{definition}
\theoremstyle{definition}
\newcommand{\BBR}{\mathbb{R}}
\newcommand{\BBN}{\mathbb{N}}
\newcommand{\BBS}{\mathbb{S}}
\newcommand{\cX}{{\EuScript X}}    
\newcommand{\bx}{{\boldsymbol{x}}}
\DeclareMathOperator{\Diffeo}{Diffeo}
\DeclareMathOperator{\Hom}{Hom}
\newcommand{\subout}{{\text{\textup{out}}}}
\newcommand{\subin}{{\text{\textup{in}}}}
\newcommand{\by}[1]{\textrm{{#1}}}
\newcommand{\jour}[1]{\textit{{#1}}}
\newcommand{\vol}[1]{\textbf{{#1}}}
\newcommand{\book}[1]{\textit{{#1}}}
\begin{document}

\title[Jacobi identities for Wronskians in multidimension]{Jacobi identities for Wronskian determinants\\[3pt] over multidimension}


\author[A.\,V.\,Kiselev]{Arthemy V.\ Kiselev}
\thanks{\textit{Address}:\quad 
Bernoulli Institute for Mathematics, Computer Science \&\ 
Artificial Intelligence, 
University of Groningen, P.O.\,Box\:407, 9700\,AK Groningen, 
The Netherlands.}

\dedicatory{\textup{Based on the talk given at the XIII International symposium on Quantum Theory and Symmetries -- QTS13 (Yerevan, Armenia, 28 July -- 1 August 2025).
}}

\subjclass[2010]{13D10, 15A15, 17A42, 17B01, 17B66}

\keywords{Jacobi identity, jet space, multivariate analysis, $N$-ary bracket,
strongly homotopy Lie algebra, Wronskian determinant}

\date{5 November 2025}

\begin{abstract}
The generalised Wronskian of differential order $k\geqslant 1$ for $N$ functions $f_1$,\ $\ldots$,\ $f_N$ in $d\geqslant 1$ independent variables $x^1$,\ $\ldots$,\ $x^d$ is the determinant of the matrix with these functions' derivatives 
$\partial^{|\sigma_i|} f_j / \partial (x^1)^{\sigma_i^1}\cdots \partial (x^d)^{\sigma_i^d}$ (of orders $0 \leqslant |\sigma_i| \leqslant k$), where the multi\/-\/indices $\sigma_i$ mark (all or part of) fibre variables $u_{\sigma_i}$ in the $k$th jet space $J^k\bigl(\BBR^d\to\BBR\bigr)$.
We prove that these (in)complete Wronskians --\,provided that their lowest\/-\/order parts are complete at differential orders $\ell\leqslant 1$\,-- over the $d$-\/dimensional base satisfy the table of bi\/-\/linear, Jacobi\/-\/type identities for Schlessinger\/--\/Stasheff's strongly homotopy Lie algebras. 
\end{abstract}




\maketitle

\section{Introduction}\label{SecIntroduction}
\noindent%
The Wronskian determinants are used to inspect linear (in)\/dependence 
of functions\footnote{\label{FootWronskBehaves}
The Wronskian of $N$ scalar functions has conformal weight $N(N-1)/2$, so 
itself is not a scalar function if $N>1$. Likewise, the arguments $f_j$ can be not scalar functions (of conformal weight $0$) but coefficients of positive\/-\/order differential operators on~$\BBR$, hence themselves behave under coordinate changes on the base $\BBR\ni x$.}
$f_1$,\ $\ldots$,\ $f_N$ (differentiable enough many times on an interval in~$\BBR$): if their Wronskian,
\[
W^{0,1,\ldots,N-1}(f_1,\ldots,f_N)=\det\bigl(\partial^{i-1} f_j / \partial x^{i-1}
\bigr)
\]
is not identically zero, 
then they are linearly independent.

\begin{ex}
\label{FootPeanoCounterex}
$W^{0,1}(x,x^2) = \left| \begin{smallmatrix} x & x^2 \\ 1 & 2x 
\end{smallmatrix} \right| = x^2 \not\equiv 0$ on $[-1,1]\ni x$.
Still the vanishing of the Wronskian on an interval does not yet imply linear independence; here is Peano's counterexample:
$W^{0,1}( x^2, x|x| ) \equiv 0$ on $[-1,1]\ni x$, but the functions $x^2$ and $x|x|$ are linearly independent on any open neighbourhood of the origin.
\end{ex}

For differentiable functions $f_j\in C^k\bigl( \BBR^{d\geqslant 1} \to \BBR\bigr)$ in many variables $x^1$,\ $\ldots$,\ $x^d$, the concept of Wronskian was re\/-\/discovered over decades by many authors from various disciplines
(see \cite{LeVeque1956,Schmidt1980,Wolsson1989a}, also~\cite{ForKac} referring to 2002--3 or A.\,G.\,Khovanskii in 2003--4 
(private communication)).

To generalise the Wronskian determinants to spaces of functions on $\BBR^d$ of dimensions $d\geqslant 1$, fix the differential order $k\geqslant 1$ (and work with arguments $f_j\in C^k(\BBR^d\to\BBR)$). 
List all the (multi\/-\/indices of) derivatives\footnote{\label{FootDimJetFibre}
\textbf{Lemma.} The dimension of $k$th jet fibre in the jet bundle $J^k(\BBR^d\to\BBR)$, counting $\sigma=\varnothing$ as well, equals $\binom{d+k}{d}$ under the natural assumption that $u_{xy}=u_{yx}$, etc., for all~$u_\sigma$.}
$u_{\sigma_i}$ of intermediate orders: $0\leqslant |\sigma_i|\leqslant k$.
We say that in a fixed system of coordinates on the affine 
$\BBR^d$, the \emph{complete} $k$th differential order Wronskian $W^{d\geqslant 1}_{k\leqslant 1}$ of $N=\binom{d+k}{d}$ arguments $f_j$ viewed as functions from $C^k(\BBR^d\to\BBR)$ is defined by the formula
\begin{equation}\label{EqFullWronskD}
W^{d\geqslant 1}_{k\geqslant 1} \bigl(f_1,\ldots,f_N\bigr) =
\det\Bigl(\partial^{|\sigma_i|} f_j \bigl/ \partial (x^1)^{\sigma_i^1}\ldots\partial (x^d)^{\sigma_i^d} \Bigr),
\end{equation}
where $\sigma_i=(\sigma_i^1,\ldots,\sigma_i^d)=(\#x^1,\ldots,\#x^d)$ runs over the multi\/-\/indices of $k$th jet's fibre coordinates $u_{\sigma_i}$; the index $i$ enumerates rows and $j$ counts columns ($1\leqslant i,j\leqslant N$).

\begin{ex}[{cf.~\cite{ForKac}}]\label{ExTernaryBr}
Over the Cartesian plane $\BBR^{d=2}\ni(x,y)$ and for the order bound $k=1$, the complete Wronskian is $W^{d=2}_{k=1} = \mathbf{1}\wedge \partial/\partial x\wedge \partial/\partial y$, that is
\begin{equation}\label{EqTernaryBracket}
W^{d=2}_{k=1}\bigl(f,g,h\bigr) = \begin{vmatrix} f & g & h \\ f_x & g_x & h_x \\ f_y & g_y & h_y \end{vmatrix},\qquad f,g,h\in C^1(\BBR^2\to\BBR).
\end{equation}
This ternary operator is tri\/-\/linear 
and totally antisymmetric w.r.t.\ its arguments: 
\[
W^{d=2}_{k=1}\bigl(\pi(f),\pi(g),\pi(h)\bigr)
= (-)^\pi W^{d=2}_{k=1}\bigl(f,g,h\bigr)
\]
for any permutation $\pi\in\mathbb{S}_3$.
\end{ex}

\begin{rem}\label{FootPeano2D}
The (in)complete generalised Wronskians over dimensions $d\geqslant 1$, which we presently describe, are subject to the same reservations --\,about their (in)sufficience to show the linear (in)dependence of functions\,-- as in the classical case of $d=1$. For \textmd{example}, the three functions $f(x,y)=x^2 y^2$,\ $g(x,y)=x|x|\cdot y^2$,\ and $h(x,y)=x^2\cdot y|y|$ are linearly independent on the square $[-1,1]\times[-1,1]\ni(x,y)$, yet their complete first\/-\/order generalised Wronskian from Eq.~\eqref{EqTernaryBracket}, see above, 
vanishes identically on the entire domain of definition: 
$W^{d=2}_{k=1} \bigl(x^2y^2$,$x|x|\cdot y^2$,$x^2\cdot y|y|\bigr)\equiv 0$.
Indeed, for $x\geqslant0$ (and any $y\in\BBR$) determinant's 1st and 2nd columns coincide; for $y\geqslant 0$ (and any $x\in\BBR$) the 1st and 3rd columns coincide, whereas on $x<0$ and $y<0$, the 2nd and 3rd columns equal minus the first.
\end{rem}

\begin{define}
The generalised Wronskian determinant $'W^{d\geqslant 1}_{\underline{k\geqslant 1}}$ is \emph{incomplete} if its list $\{\sigma_i \}$ lacks certain multi\/-\/indices; exclusion is allowed only for the \emph{highest}\/-\/order derivatives (with $|\sigma_i|=k$).
\end{define}

\begin{ex}\label{FootExIncompleteWdimD}
For dimension $d=2$ and order $k=2$, by excluding the last multi\/-\/index $\sigma_6 = yy=(0,2)$ of top order $|\sigma_6|=2$ from their full list $\{\varnothing$,$x$,$y$,$xx$,$xy$,$yy\}$ at all orders $0\leqslant\ell\leqslant k=2$, we obtain the incomplete Wronskian dererminant of size $5\times 5$. Clearly, if this determinant already is not identically zero for five given functions, they are linearly independent (irrespective of the values of their second partial derivatives in~$y$).
\end{ex}

\subsection*{Preliminaries: strongly homotopy Lie algebras}
Let us recall that the usual Wronskians (over dimension $d=1$, see \cite{Dzhuma2002})
and complete generalised Wronskians (over $d>1$ and of differential orders $k,\ell\geqslant 1$, see \cite{ForKac}) satisfy the two\/-\/parametric (as $k,\ell\in\BBN$) table of Jacobi\/-\/type identities, bilinear w.r.t.\ the $N$-ary structures of orders $k$ and $\ell$, for strongly homotopy Lie algebras with zero differential.\footnote{\label{FootRefLadaStasheffSH}
The reader is addressed to the notes \cite{LadaStasheff1993} for definitions and 
physical context: how homotopy Lie algebras appear in various models, see literature references therein.} 
Namely, denote by $A\mathrel{{:}{=}} C^{r\gg 1}(\BBR^d\to\BBR)$ the algebra of `good' functions; let $\Delta\in\Hom_\Bbbk\bigl(\bigwedge^{N_\subout} A$,$A\bigr)$ and $\nabla\in\Hom_\Bbbk\bigl(\bigwedge^{N_\subin} A$,$A\bigr)$ be $\Bbbk$-\/linear totally antisymmetric operators on~$A$. By definition, the \emph{action} of $\Delta$ on $\nabla$ is 
$\Delta[\nabla]\bigl(a_1,\ldots,a_{N_\subout+N_\subin -1}\bigr) =
  \bigl[ N_\subin! (N_\subout -1)! \bigr]^{-1} \cdot{} $
\[
\sum_{\tau\in\BBS_{N_\subin+N_\subout-1}}
(-)^\tau \Delta\bigl(\nabla\bigl( a_{\tau(1)},\ldots,a_{\tau(N_\subin)} \bigr),\\
a_{\tau(N_\subin+1)},\ldots,a_{\tau(N_\subin+N_\subout-1)} \bigr);
\]
the 
$(N_\subin+N_\subout-1)$-ary operator $\Delta[\nabla]$
is totally antisymmetric in~
$a_m\in A$.

Dzhumadil'daev proved in~\cite{Dzhuma2002} for $d=1$ (cf.\ \cite{ForKac} with any $d\geqslant 1$) that 
Wronskian determinants of arbitrary orders $k_\subout,\ell_\subin
$ satisfy the table of Jacobi 
identities,
\begin{equation}\label{EqJacobiTableFullWWdim1}
W^{d=1}_{k_\subout\geqslant 1} \bigl[ W^{d=1}_{\ell_\subin\geqslant 1} \bigr] = 0.
\end{equation}
In the recent work~\cite{PRG25Wronsk} we recall in which way the Jacobi identities of this specific type for $N$-ary structures, given on~$A$ by the Wronskians, appear in the course of homotopy deformation of the Lie algebra $\cX^1(\BBR^{d=1})$ of vector fields on a one\/-\/dimensional base manifold.

\section{Jacobi identities for (in)complete Wronskians}\label{SecGrowWronsk}
\noindent%
We now strengthen the result in~\cite{ForKac}, extending the table of Jacobi\/-\/type identities \eqref{EqJacobiTableFullWWdim1} (over $d=1$ and over $d>1$ for complete sets of top\/-\/order derivatives in either Wronskian) to the case of \emph{incomplete} Wronskians: they may lack subsets of derivatives in the highest orders $k,\ell>1$
over dimension~$d>1$.

\begin{condition}\label{CondOrd1Full}
In what follows (and in contrast with Counterexample~\ref{CounterExNonZeroJac} on p.~\pageref{CounterExNonZeroJac} below), the (in)complete Wronskians
$'W^{d\geqslant 1}_{\underline{s\geqslant 1}}$ are admissible only if their set of \emph{first}\/-\/order derivatives is complete,
i.e.\ every $\partial/\partial x^a$ shows up in $'W^{d\geqslant 1}_{\underline{s\geqslant 1}} = \mathbf{1}\wedge\partial_{x^1}\wedge\ldots\wedge\partial_{x^d}\wedge\ldots$; omission of multi\/-\/indice(s) can occur only in the highest order~$s>1$.
\end{condition}

\begin{ex}\label{FootExWadmissible}
Consider again the example ($d=2$,$k=2$) in footnote~\ref{FootExIncompleteWdimD} on p.~\pageref{FootExIncompleteWdimD}: admissible are, for instance, the incomplete Wronskians $\mathbf{1}\wedge\partial_x\wedge\partial_y\wedge\partial_{xx}$ or
$\mathbf{1}\wedge\partial_x\wedge\partial_y\wedge\partial_{xx}\wedge\partial_{yy}$, etc., but not $\mathbf{1}\wedge\partial_y\wedge\partial_{xx}\wedge\partial_{xy}\wedge\partial_{yy}$ which lacks $\partial_x=\partial/\partial x$ in order~$1$.
\end{ex}

We recall from Lemma in footnote~\ref{FootDimJetFibre} on p.~\pageref{FootDimJetFibre} that $N=\binom{d+k}{d}$ is the number of different (modulo $u_{xy}=u_{yx}$, etc.) partial derivatives of all orders $0$,$\ldots$,$k\geqslant 1$ w.r.t.\ the $d\geqslant1$ independent variables $x^1$,$\ldots$,$x^d$. 
The \emph{complete} generalised Wronskians
$W^{d\geqslant1}_{k\geqslant1} = \mathbf{1}\wedge\partial_{x^1}\wedge\ldots\wedge\partial_{x^d}\wedge\ldots\wedge\partial_{x^d}^k$ contain all the multi\/-\/indices of these derivatives, starting from $\varnothing$ in the leading wedge factor $\mathbf{1}$ till all of the derivations $\partial^{|\sigma|}/\partial\bx^\sigma$ with $|\sigma|=k$. In this case of complete sets, we proved
that Jacobi identities \eqref{EqJacobiTableFullWWdim1} extend from $d=1$ to arbitrary dimensions~$d\geqslant1$.

\begin{ex}[cf.~\cite{ForKac}]\label{FootExTernaryJacobiDim2}
Over $d=2$ at order $k=1$, 
ternary bracket~\eqref{EqTernaryBracket} satisfies the ternary Jacobi identity,
$\mathbf{1}\wedge\partial_x\wedge\partial_y \bigl[ \mathbf{1}\wedge\partial_x\wedge\partial_y \bigr] = 0$, which is verified by direct calculation.
\end{ex}

\begin{theor}[\cite{ForKac}]\label{ThJacobiTableFullWWdimD}
Over $d\geqslant 1$, the complete generalised
Wronskians satisfy the Jacobi identities 
$W^{d\geqslant 1}_{k_\subout\geqslant 1} \bigl[ W^{d\geqslant 1}_{\ell_\subin\geqslant 1} \bigr] = 0$ for all differential orders $k_\subout,\ell_\subin\in\BBN$.
\end{theor}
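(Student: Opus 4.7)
The plan is to realise the complete generalised Wronskian as an antisymmetrised product of commuting derivations on the base and then to track how the outer Wronskian distributes over the inner one via the Leibniz rule. First I would fix an enumeration $\sigma_0=\varnothing,\sigma_1,\ldots,\sigma_{N-1}$ of the multi\/-\/indices in the $k$th jet fibre and write $D_{\sigma_i}=\partial^{|\sigma_i|}/\partial(x^1)^{\sigma_i^1}\cdots\partial(x^d)^{\sigma_i^d}$, so that $D_\varnothing=\id$ and
\[
W^d_k(f_1,\ldots,f_N)=\sum_{\pi\in\BBS_N}(-)^\pi\,D_{\sigma_{\pi(0)}}(f_1)\cdots D_{\sigma_{\pi(N-1)}}(f_N).
\]
The two ingredients I need are (i) pairwise commutativity of the $D_{\sigma_i}$, and (ii) the completeness property that whenever $D_{\sigma_i}(gh)=\sum_{\alpha+\beta=\sigma_i}\binom{\sigma_i}{\alpha}D_\alpha(g)\,D_\beta(h)$ is written out by Leibniz, every multi\/-\/index $\alpha$ and $\beta$ produced by the splitting already belongs to the list $\{\sigma_j\}$.

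Next I would substitute this formula for $W^d_{k_\subout}$ and $W^d_{\ell_\subin}$ into the definition of the bracket action, apply Leibniz to every outer derivation $D_{\sigma_i}$ that falls on the inner Wronskian, and then antisymmetrise over the full tuple of $M=N_\subout+N_\subin-1$ arguments. Every summand of the resulting triple sum is a signed monomial $D_{\gamma_1}(a_{m_1})\cdots D_{\gamma_M}(a_{m_M})$ whose list of multi\/-\/indices $\gamma_r$ is obtained from the disjoint union of the outer and inner index lists by transferring one outer index through a Leibniz splitting into two inner slots. By the completeness property, each such split produces a multi\/-\/index $\alpha$ (or $\beta$) that already appears elsewhere in the combined list of $M$ derivations; pairing the summand with the transposition of the two arguments that carry the coinciding indices then yields cancellation by antisymmetry in $a_1,\ldots,a_M$.

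The main obstacle is the bookkeeping of the binomial weights $\binom{\sigma_i}{\alpha}$ together with the combinatorial signs produced by the two permutations inside the Wronskians and the outer antisymmetrisation $\tau$. In $d=1$ the chain $\varnothing\subset x\subset x^2\subset\cdots$ is totally ordered and the cancellation collapses to Dzhumadil'daev's tabular verification~\cite{Dzhuma2002}; for $d\geqslant 2$ the multi\/-\/indices form a $d$\/-\/dimensional simplex, and one must either argue by induction on $k_\subout+\ell_\subin$ (peeling off a top\/-\/order derivative from the outer Wronskian as $\partial_{x^a}$ acting on a lower\/-\/order complete Wronskian), or invoke the ``closedness under $\partial/\partial x^a$'' of the index set directly --\,which is exactly the completeness hypothesis of Theorem~\ref{ThJacobiTableFullWWdimD}. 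This hypothesis, precisely, will have to be relaxed with the aid of Condition~\ref{CondOrd1Full} when extending to incomplete Wronskians below.
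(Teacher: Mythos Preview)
Your high\/-\/level strategy --- expand, observe that every monomial $D_{\gamma_1}(a_1)\cdots D_{\gamma_M}(a_M)$ carries a repeated multi\/-\/index, and cancel by antisymmetry --- is the same as the paper's. But your execution takes a harder road than necessary, and the ``main obstacle'' you flag (binomial weights and combinatorial signs from the Leibniz expansion) is one the paper simply never encounters.

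The paper's proof scheme does not attempt to pair individual Leibniz summands via explicit transpositions. Instead it uses two observations. First, the Jacobiator $'W^d_{k_\subout}[\,W^d_{\ell_\subin}\,]$ is \emph{totally antisymmetric by construction}; hence the coefficient of any monomial $D_{\gamma_1}(a_1)\cdots D_{\gamma_M}(a_M)$ with $\gamma_i=\gamma_j$ for some $i\neq j$ is automatically zero --- no weight\/-\/matching is required. Second, a crude order count (a pigeonhole on the total differential order $\sum_m|\gamma_m|$) shows that the tuple $(\gamma_1,\ldots,\gamma_M)$ can never consist of $M=N_\subin+N_\subout-1$ pairwise distinct multi\/-\/indices: the inner complete Wronskian already exhausts \emph{all} multi\/-\/indices of order $\leqslant\ell_\subin$, and the $N_\subout-1$ direct outer slots receive multi\/-\/indices of order $\leqslant k_\subout\leqslant\ell_\subin$, so for distinctness one would need derivations of order $\geqslant\ell_\subin+1$ on those slots, which the outer Wronskian cannot supply. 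That is the whole argument.

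Your route --- tracking the Leibniz split $D_{\sigma_i}(gh)=\sum_{\alpha+\beta=\sigma_i}\binom{\sigma_i}{\alpha}D_\alpha(g)D_\beta(h)$ and then trying to build a sign\/-\/reversing involution on the resulting summands --- can be made to work, but the binomial factors genuinely complicate the pairing (two summands related by a transposition need not carry equal weights before the outer antisymmetrisation is applied). You avoid this entirely by invoking the total antisymmetry of $\Delta[\nabla]$ \emph{after} summation rather than before. What your formulation does buy is a clearer view of \emph{why} completeness matters: it is precisely the downward\/-\/closure of the index set under Leibniz splitting, and this is the feature that gets relaxed (to closure at order~$1$ only, via Condition~\ref{CondOrd1Full}) in Theorems~\ref{ThJacobiTableFullInPartOut}--\ref{ThJacobiTablePartInPartOutInsuff}.
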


\begin{proof}[Proof scheme \textup{(cf.~\cite[Prop.\,5]{PRG25Wronsk} for $d=1$ and~\cite{ForKac} for $d\geqslant1$)}]
By construction, the ope\-ra\-tor $W^d_{k}\bigl[ W^d_{\ell} \bigr]$ is totally antisymmetric w.r.t.\ its $N_\subin + N_\subout -1$ arguments; hence, to be nonzero, this Jacobiator must act by pairwise non\/-\/coinciding differentiations $\partial^{|\sigma|} / \partial\bx^\sigma$ on all of its arguments. The key idea is to estimate the overall sum of their differential orders (in other words, count all $\partial/\partial x^a$ at hand for whatever $a\in\{1$,$\ldots$,$d\}$).
Without loss of generality suppose $\ell_\subin = \max(k_\subout$,$\ell_\subin)\geqslant k_\subout$; the complete Wronskian $W^{d\geqslant 1}_{\ell\geqslant 1} = \mathbf{1}\wedge \partial/\partial{x^1}\wedge\ldots\wedge\partial^{\ell}/\partial(x^d)^{\ell}$ contains all the differentiations of all orders $0$,$\ldots$,$\ell_\subin$.
To have more derivatives 
that would not repeat the previously considered ones, higher\/-\/order operators (of orders${}>\ell_\subin$) are needed for the second, \ldots, last arguments of the other Wronskian. 
The other Wronskian, 
$\mathbf{1}\wedge\langle{}$terms of order${}\leqslant\ell_\subin\rangle$, contains the right number of positive\/-\/order terms, but each of those differential orders does not exceed $\ell_\subin < \ell_\subin+1$, contrary to the required.
Therefore, at least one differentiation repeats twice, and the antisymmetrisation cancels out the entire operator's action.
\end{proof}

\begin{rem}\label{FootRhoNearW}
From the proof it is readily seen that Wronskians can be pre\/-\/multiplied by an arbitrary factor $\varrho(\bx)$ --\,which, via the Leibniz rule, can absorb part of the derivatives acting on the inner Wronskian when it becomes the first argument of the outer structure,\,-- still preserving the statement of 
Theorem~\ref{ThJacobiTableFullWWdimD}: all the Jacobiators vanish.
\end{rem}

The flaw of assertion self in Theorem~\ref{ThJacobiTableFullWWdimD} is that over big dimension $d>1$, the matrix size of either Wronskian determinant leaps from
$N(d,r)=\binom{d+r}{d}$ to $N(d,r+1)=\binom{d+r+1}{d} > N(d,r)+1$ as the order $r$ increments by~$+1$. We claim that whenever $k,\ell>1$, the conclusion (with basically the same proof) can be strengthened: having completed the Wronskians $W^{d\geqslant1}_{k_\subout\geqslant1}$ and $W^{d\geqslant1}_{\ell_\subin\geqslant 1}$ at the preceding differential orders, we can \emph{gradually} accumulate either Wronskian in the next order $k_\subout+1$ and $\ell_\subin+1$ by incorporating new derivatives one by one. 
Along many intermediate scenarios (for choosing the subsets of multi\/-\/indices in the next, not yet complete differential order), the \emph{complete} Wronskians
$W^{d\geqslant1}_{k_\subout+1 > 1}$ and $W^{d\geqslant1}_{\ell_\subin+1 > 1}$ are attained.

In what follows we assume again that, without loss of generality, $\ell_\subin \geqslant k_\subout$ (otherwise, swap `in'${}\rightleftarrows{}$`out'). We stress that under Condition~\ref{CondOrd1Full}, both the (in)complete Wronskians
$'W^{d\geqslant1}_{\underline{k_\subout\geqslant1}}$ and $'W^{d\geqslant1}_{\underline{\ell_\subin\geqslant 1}}$ must contain the complete sets of \emph{first}\/-\/order derivations $\partial 
_{x^1}\wedge\ldots\wedge\partial_{x^d}$.

\begin{theor}\label{ThJacobiTableFullInPartOut}
Suppose that in the senior order $\ell_\subin\geqslant k_\subout\geqslant 1$, the Wronskian $W^{d\geqslant 1}_{\ell_\subin\geqslant 1}$ is complete;
the other Wronskian $'W^{d\geqslant 1}_{\underline{k_\subout\geqslant 1}}$ can be either incomplete in its highest order $1 < k_\subout \leqslant \ell_\subin$ or complete of order $k_\subout=1$, $W^{d\geqslant 1}_{k_\subout=1}$ without any derivatives of order${}\geqslant2$.
Then the Jacobi identity holds: $'W^{d\geqslant 1}_{\underline{k_\subout\geqslant 1}} \bigl[ W^{d\geqslant 1}_{\ell_\subin\geqslant 1} \bigr] = 0$.
\end{theor}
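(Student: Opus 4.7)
The plan is to revisit the pigeon-hole argument behind Theorem~\ref{ThJacobiTableFullWWdimD} and sharpen its bookkeeping to accommodate the missing top-order multi-indices in the outer Wronskian. Denote by $\Sigma$ the set of multi-indices appearing in $'W^{d}_{\underline{k_\subout}}$ and by $T$ the (complete) set of multi-indices for $W^{d}_{\ell_\subin}$, with cardinalities $N_\subout=|\Sigma|$ and $N_\subin=|T|=\binom{d+\ell_\subin}{d}$. Every $\sigma\in\Sigma$ satisfies $|\sigma|\leq k_\subout\leq \ell_\subin$, and $T$ exhausts all multi-indices of those orders, yielding the crucial inclusion $\Sigma\subseteq T$.

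Next I would fully expand the Jacobiator $'W^{d}_{\underline{k_\subout}}\bigl[W^{d}_{\ell_\subin}\bigr]$ by combining the two determinantal expansions with the Leibniz rule applied to the outer derivative $\partial^{\sigma_*}$ (with $\sigma_*\in\Sigma$) that lands on $W^{d}_{\ell_\subin}$ as a whole. Each monomial is then indexed by a quadruple $(\sigma_*,\lambda,\pi,\underline{\mu})$, where $\lambda\in\BBS_{N_\subin}$ and $\pi\in\BBS_{N_\subout}$ are the permutations from the two determinants and $\underline{\mu}=(\mu_1,\ldots,\mu_{N_\subin})$ is a Leibniz distribution satisfying $\sum_j\mu_j=\sigma_*$. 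The multi-indices differentiating the $n=N_\subin+N_\subout-1$ arguments of the Jacobiator form the multiset
\[
M=\{\tau_{\lambda(j)}+\mu_j\}_{j=1}^{N_\subin}\cup\{\sigma_{\pi(i)}\}_{i=2}^{N_\subout},
\]
and total antisymmetry of the Jacobiator in its arguments annihilates any monomial whose $M$ contains a repetition. Thus it suffices to exhibit, for every quadruple above, a coincidence inside $M$.

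The crux is a pigeon-hole estimate on the \emph{unshifted} inner derivatives $U:=\{\tau_{\lambda(j)}:\mu_j=\varnothing\}\subseteq T$: since every non-trivial $\mu_j$ contributes at least $1$ to $\sum_j|\mu_j|=|\sigma_*|\leq k_\subout$, the complement obeys $|T\setminus U|\leq k_\subout$. Under Condition~\ref{CondOrd1Full} together with the assumption $d\geq 2$, the outer set satisfies $|\Sigma|\geq k_\subout+2$: for $k_\subout=1$ this reads $|\Sigma|=d+1\geq 3$, whereas for $k_\subout\geq 2$ it follows from $|\Sigma|\geq\binom{d+k_\subout-1}{d}+1$ via the elementary estimate $\binom{d+k_\subout-1}{d}\geq d+k_\subout-1\geq k_\subout+1$, valid for $d\geq 2$. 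Consequently $|\Sigma\setminus\{\sigma_*\}|\geq k_\subout+1>|T\setminus U|$; since $\Sigma\setminus\{\sigma_*\}\subseteq T=U\sqcup(T\setminus U)$, this forces $(\Sigma\setminus\{\sigma_*\})\cap U\neq\varnothing$, producing the desired coincidence $\sigma_{\pi(i)}=\tau_{\lambda(j)}$ inside $M$ and killing the term upon antisymmetrisation.

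For $d=1$ no genuine incompleteness is permitted by Condition~\ref{CondOrd1Full}, so Theorem~\ref{ThJacobiTableFullInPartOut} there reduces to Theorem~\ref{ThJacobiTableFullWWdimD}. I expect the arithmetic inequality $|\Sigma|\geq k_\subout+2$ to be the main obstacle: it is precisely Condition~\ref{CondOrd1Full} that keeps the outer set large enough for the pigeon-hole to bite, and dropping even one first-order derivative from $\Sigma$ would undercut the estimate---in harmony with the obstruction that shows up in Counterexample~\ref{CounterExNonZeroJac}.
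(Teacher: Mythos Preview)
Your argument is correct, and it follows the same overall strategy as the paper's---expand, invoke total antisymmetry, and exhibit a forced repetition in the multiset $M$ of derivatives---but the mechanism you use to detect the repetition is genuinely different from the one the paper intends.

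The paper's proof of Theorem~\ref{ThJacobiTableFullInPartOut} is literally ``word by word'' that of Theorem~\ref{ThJacobiTableFullWWdimD}, and that proof is a \emph{total-order} count: the sum of all differential orders appearing in $M$ equals $\sum_{\tau\in T}|\tau|+\sum_{\sigma\in\Sigma}|\sigma|$ (the Leibniz shift just moves $|\sigma_*|$ from the outer sum into the inner one). If the $N_\subin+N_\subout-1$ entries of $M$ were pairwise distinct, then---because $T$ already exhausts every multi-index of order $\leqslant\ell_\subin$---at least $N_\subout-1$ of them would have order $\geqslant\ell_\subin+1$, forcing the total to be at least $\sum_{\tau\in T}|\tau|+(N_\subout-1)(\ell_\subin+1)$. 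But $\Sigma$ contains $\varnothing$ and all remaining $\sigma\in\Sigma$ satisfy $|\sigma|\leqslant k_\subout\leqslant\ell_\subin$, so $\sum_{\sigma\in\Sigma}|\sigma|\leqslant(N_\subout-1)\ell_\subin$, a contradiction. This works uniformly in $d\geqslant1$ and uses nothing about $\Sigma$ beyond $\varnothing\in\Sigma$ and the order bound; in particular it needs neither your auxiliary inequality $|\Sigma|\geqslant k_\subout+2$ nor a separate treatment of $d=1$.

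Your route instead tracks the \emph{unshifted} inner multi-indices $U$ and pigeon-holes $\Sigma\setminus\{\sigma_*\}$ against $T\setminus U$. This is perfectly valid for $d\geqslant2$ (your estimate $|\Sigma|\geqslant k_\subout+2$ is correct there), and your reduction of the $d=1$ case to Theorem~\ref{ThJacobiTableFullWWdimD} closes the remaining gap. What your approach buys is a more explicit identification of \emph{which} coincidence occurs (an outer $\sigma'$ meeting an unshifted inner $\tau$), whereas the paper's order count is shorter and dimension-uniform but non-constructive about the repeated index.
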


%
\begin{proof}
Here, the proof repeats --\,word by word
\,-- that 
of Theorem~\ref{ThJacobiTableFullWWdimD}.
\end{proof}

\begin{theor}\label{ThJacobiTablePartInEnoughPartOut}
Suppose that in its senior order $\ell_\subin > 1$ the Wronskian $'W^{d\geqslant 1}_{\underline{\ell_\subin>1}}$ is incomplete, still the (in)complete other Wronskian determinant $'W^{d\geqslant 1}_{\underline{k_\subout\geqslant 1}}$ of size $N_\subout\times N_\subout$ with $k_\subout \leqslant \ell_\subin$ is such that $N_\subout - 1 > \bigl($the number of highest, $\ell_\subin$th\/-\/order derivatives missing in the top of the incomplete senior\/-\/order
Wronskian $'W^{d\geqslant 1}_{\underline{\ell_\subin > 1}}\bigr)$.
Then the Jacobi identity holds: $'W^{d\geqslant 1}_{\underline{k_\subout\geqslant 1}} \bigl[ 'W^{d\geqslant 1}_{\underline{\ell_\subin > 1}} \bigr] = 0$.
\end{theor}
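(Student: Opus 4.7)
The plan is to run the same antisymmetry\/-\/and\/-\/counting argument as in the proof of Theorem~\ref{ThJacobiTableFullWWdimD}, but now with explicit book\/-\/keeping of the $m$ top\/-\/order multi\/-\/indices missing from the inner Wronskian. The Jacobiator is totally antisymmetric in its $M = N_\subin + N_\subout - 1$ arguments, so a non\/-\/vanishing monomial in its expansion must assign $M$ pairwise distinct differentiations to those arguments; any repetition is killed by antisymmetrisation.

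First I would expand the Jacobiator via the Leibniz rule. Denote by $R = \{\rho_1,\ldots,\rho_{N_\subin}\}$ the multi\/-\/index set of the inner Wronskian and by $\Sigma = \{\sigma_1,\ldots,\sigma_{N_\subout}\}$ that of the outer. In the term where the inner Wronskian occupies the outer slot indexed by~$i^*$, the derivation $\partial^{\sigma_{i^*}}$ distributes as $\sigma_{i^*} = \sum_i \lambda_i$ among the inner's $N_\subin$ arguments, which then carry differentiations of multi\/-\/indices $\rho_i + \lambda_i$, while the remaining $N_\subout - 1$ outer arguments carry $\{\sigma_k : k \neq i^*\}$. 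The key invariant, independent of the choice of $i^*$ and of the Leibniz distribution $\lambda$, is that the total differential order of these $M$ derivations equals
\[
|R| + |\Sigma| \eqdef \sum_{i=1}^{N_\subin}|\rho_i| + \sum_{k=1}^{N_\subout}|\sigma_k|.
\]

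The second step is to compare this fixed total with the minimum possible sum of any $M$ pairwise distinct multi\/-\/indices in $d$ variables. Writing $N(d,r) = \binom{d+r}{d}$ and $T(d,r) = \sum_{|\mu|\leq r}|\mu|$, the incompleteness of the inner gives $|R| = T(d,\ell_\subin) - \ell_\subin\, m$. The hypothesis $N_\subout - 1 > m$ forces $M > N(d,\ell_\subin)$, so at least $N_\subout - 1 - m > 0$ of the $M$ differentiations must live in orders strictly greater than~$\ell_\subin$. This yields
\[
(\text{minimum sum}) \;\geq\; T(d,\ell_\subin) + (\ell_\subin + 1)(N_\subout - 1 - m),
\]
a bound that persists even when $N_\subout - 1 - m$ exceeds the size of the order\/-\/$(\ell_\subin{+}1)$ shell: any leftover multi\/-\/index then contributes order $\geq \ell_\subin + 2$, only tightening the inequality.

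Finally, Condition~\ref{CondOrd1Full} singles out the identity in~$\Sigma$; bounding each of the $N_\subout - 1$ non\/-\/identity outer multi\/-\/indices by $k_\subout \leq \ell_\subin$ yields $|\Sigma| \leq \ell_\subin (N_\subout - 1)$, so
\[
|R| + |\Sigma| \;\leq\; T(d,\ell_\subin) + \ell_\subin(N_\subout - 1 - m),
\]
which is \emph{strictly} less than the minimum\/-\/sum bound by the positive gap $N_\subout - 1 - m > 0$. Hence no monomial in the expansion can carry $M$ pairwise distinct differentiations, and the Jacobiator vanishes by total antisymmetry. The one delicate point I expect is justifying the minimum\/-\/sum lower bound when the $(\ell_\subin{+}1)$\/-\/st shell is too small to hold all $N_\subout - 1 - m$ extra multi\/-\/indices; this is handled by the monotone observation that any multi\/-\/index beyond the complete pyramid of order $\leq \ell_\subin$ contributes at least $\ell_\subin + 1$ to the sum, never less.
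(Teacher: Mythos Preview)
Your argument is correct and follows essentially the same order\/-\/counting strategy as the paper's own proof: both exploit total antisymmetry to force pairwise distinct differentiations on the $N_\subin+N_\subout-1$ arguments, then show the available total differential order $|R|+|\Sigma|$ falls strictly short of what $M$ distinct multi\/-\/indices would require. The paper phrases this pictorially\,---\,`fictitiously move' the $m$ missing top\/-\/order derivatives from the outer Wronskian into the inner one, then observe that the remaining $N_\subout-1-m>0$ outer factors cannot reach order $\ell_\subin+1$\,---\,whereas you make the same deficit explicit via the inequality $T(d,\ell_\subin)+\ell_\subin(N_\subout-1-m) < T(d,\ell_\subin)+(\ell_\subin+1)(N_\subout-1-m)$; the two arguments are the same counting in different dress.
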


%
\begin{proof}
We only need to bound the (sum of) orders $|\sigma|$ of $\partial^{|\sigma|}/\partial \bx^\sigma$. Do `complete' the senior\/-\/order Wronskian by fictitiously moving 
the lacking number of derivatives from the lower\/-\/order Wronskian --- 
neglecting any repetitions of multi\/-\/indices $\sigma$ and pretending that all 
the carried derivatives are senior order, $\ell_\subin$.
One Wronskian now completed, the other again does not attain the required dif\-fe\-ren\-ti\-al order $\ell_\subin +1$ in each of its second, $\ldots$, last remaining wedge factor.
\end{proof}

\begin{theor}\label{ThJacobiTablePartInPartOutInsuff}
Suppose that in its senior order $\ell_\subin > 1$ the Wronskian $'W^{d\geqslant 1}_{\underline{\ell_\subin > 1}}$ is incomplete, and the (in)complete other Wronskian determinant $'W^{d\geqslant 1}_{\underline{k_\subout\geqslant 1}}$ of size $N_\subout\times N_\subout$ with $k_\subout \leqslant \ell_\subin$ is such that
$N_\subout - 1 \leqslant \bigl($the number of highest, $\ell_\subin$th\/-\/order derivatives missing in the top of the incomplete senior\/-\/order
Wronskian $'W^{d\geqslant 1}_{\underline{\ell_\subin > 1}}\bigr)$.
Then the Jacobi identity holds: $'W^{d\geqslant 1}_{\underline{k_\subout\geqslant 1}} \bigl[ 'W^{d\geqslant 1}_{\underline{\ell_\subin > 1}} \bigr] = 0$.
\end{theor}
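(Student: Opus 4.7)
The plan is to continue the counting strategy from Theorem~\ref{ThJacobiTableFullWWdimD} and its extensions (Theorems \ref{ThJacobiTableFullInPartOut} and \ref{ThJacobiTablePartInEnoughPartOut}), pushing the sum\/-\/of\/-\/orders bookkeeping one step further to cover the regime where the outer Wronskian is too small to `complete' the inner by fictitious transfer. As in those proofs, I would start by expanding the Jacobiator via the Leibniz rule: each argument $a_m$ acquires a \emph{final multi\/-\/index} $\alpha_m$, equal either to $\sigma^\subin_i+\mu_i$ (when $a_m$ lands in the inner\/-\/Wronskian slot, with $\mu_1,\ldots,\mu_{N_\subin}$ a Leibniz splitting of the outer row $\sigma^\subout_{i_0}=\mu_1+\cdots+\mu_{N_\subin}$ across the inner's rows) or to some $\sigma^\subout_j$ with $j\neq i_0$ (when $a_m$ lands directly in an outer slot). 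Full antisymmetrization in the $N:=N_\subin+N_\subout-1$ arguments kills every term with a repeated $\alpha_m$, so non\/-\/vanishing demands that the multi\/-\/set $\{\alpha_1,\ldots,\alpha_N\}$ consist of $N$ pairwise distinct multi\/-\/indices.

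The key invariant is the fixed total $\sum_m|\alpha_m|=T_\subin+T_\subout$, where $T_\bullet:=\sum_{\sigma\in\Sigma_\bullet}|\sigma|$ sums the orders of all multi\/-\/indices in each Wronskian, independent of $i_0$ and of the Leibniz splitting. Let $M_{\min}(N,d)$ denote the minimum of $\sum_i|\tau_i|$ over $N$\/-\/subsets $\{\tau_1,\ldots,\tau_N\}\subset\BBZ_{\geqslant 0}^d$, attained by the greedy low\/-\/order packing. The hypothesis $N_\subout-1\leqslant m$ --\,with $m$ the number of $\ell_\subin$th\/-\/order multi\/-\/indices absent from $'W^{d}_{\underline{\ell_\subin}}$\,-- implies $N\leqslant\binom{d+\ell_\subin}{d}$, so this optimum uses only multi\/-\/indices of orders $\leqslant\ell_\subin$. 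Writing $N=\binom{d+\ell_\subin-1}{d}+j$ with $j=n_\subin+N_\subout-1\leqslant\binom{d+\ell_\subin-1}{d-1}$ (where $n_\subin$ counts the top\/-\/order indices \emph{present} in the inner), and using that every order $<\ell_\subin$ of an admissible incomplete Wronskian is complete, the hockey\/-\/stick identity delivers after a short calculation
\[
M_{\min}(N,d) - (T_\subin+T_\subout) = \ell_\subin(N_\subout-1) - T_\subout.
\]

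At this point Condition~\ref{CondOrd1Full} closes the argument. Since $'W^{d}_{\underline{k_\subout}}$ contains the derivations $\partial_{x^1},\ldots,\partial_{x^d}$, letting $t_r$ count its order\/-\/$r$ multi\/-\/indices one has $t_1\geqslant d$, $t_r=0$ for $r>k_\subout$, and
\[
\ell_\subin(N_\subout-1)-T_\subout = \sum_{r=1}^{k_\subout}(\ell_\subin-r)\,t_r \;\geqslant\; (\ell_\subin-1)\,t_1 \;\geqslant\; (\ell_\subin-1)\,d \;\geqslant\; 1
\]
thanks to $\ell_\subin>1$ and $k_\subout\leqslant\ell_\subin$. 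Consequently $T_\subin+T_\subout<M_{\min}(N,d)$: two of the $\alpha_m$ must coincide, and the total antisymmetrization annihilates the Jacobiator.

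The main technical hurdle I anticipate is the telescoping that produces the tidy identity on the right\/-\/hand side: it requires careful accounting of the missing top\/-\/order slots in \emph{both} Wronskians and a separate glance at the boundary case $N_\subout-1=m$ (where $j=\binom{d+\ell_\subin-1}{d-1}$ and $N=\binom{d+\ell_\subin}{d}$, so the greedy packing just fills the complete pyramid of order~$\ell_\subin$ and the same formula reappears along a slightly different route).
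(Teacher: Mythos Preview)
Your argument is correct and is the same sum\/-\/of\/-\/orders counting that underlies the paper's proof; the paper merely states it in one qualitative sentence (the outer Wronskian cannot furnish $N_\subout-1$ derivations of order~$\ell_\subin$ because, by Condition~\ref{CondOrd1Full}, it carries the first\/-\/order $\partial_{x^a}$'s already present in the inner), whereas you make the inequality $T_\subin+T_\subout<M_{\min}(N,d)$ explicit via the identity $M_{\min}(N,d)-(T_\subin+T_\subout)=\ell_\subin(N_\subout-1)-T_\subout=\sum_{r\geqslant1}(\ell_\subin-r)\,t_r\geqslant(\ell_\subin-1)\,d>0$. The ``technical hurdle'' you anticipate is not one: your formula holds uniformly (the boundary case $N_\subout-1=m$ just means $j=\binom{d+\ell_\subin-1}{d-1}$, and the greedy packing still sits inside order~$\ell_\subin$), so no separate treatment is needed.
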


%
\begin{proof}
Indeed, the outer Wronskian $'W^{d\geqslant 1}_{\underline{k_\subout\geqslant 1}}$ cannot supply $N_\subout - 1$ derivatives of order $\ell_\subin > 1$ --\,to let the Jacobiator act by non\/-\/coinciding derivations on all of its arguments\,-- because
$'W^{d\geqslant 1}_{\underline{k_\subout\geqslant 1}}$ contains at least one lowest\/-\/order derivation $\partial/\partial x^a$, yet their full set is already present in
$'W^{d\geqslant 1}_{\underline{\ell_\subin > 1}}$ of higher order.
\end{proof}

Only the last case, Theorem~\ref{ThJacobiTablePartInPartOutInsuff} explicitly relies on the assumption $\ell_\subin > 1$ and Condition~\ref{CondOrd1Full} that the set of first\/-\/order multi\/-\/indices is full in $'W^{d\geqslant 1}_{\underline{\ell_\subin > 1}}$. In fact, the outer Wronskian can then be incomplete of order~$1$\,!

\begin{counterexample}\label{CounterExNonZeroJac}
But this is what happens when the above assumption ($\ell_\subin > 1$) and Condition~\ref{CondOrd1Full} are ignored: over $d=2$, we have
\[
\mathbf{1}\wedge \partial/\partial y \bigl[ \mathbf{1}\wedge \partial/\partial x \bigr] = 2\cdot W^{d=2}_{r=1} \not\equiv 0,
\] 
i.e.\ the action of one incomplete low\/-\/order Wronskian on the other of same type recovers ternary bracket~\eqref{EqTernaryBracket}.
\end{counterexample}

\section{Conclusion}\label{SecConclDeformDary?}
\noindent%
We established the `no\/-\/gaps' set of Jacobi identities (from the entire table of identities for the strongly homotopy Lie algebra with zero differential): we are now free to increment the size of either Wronskian determinant by $+1$, that is without huge leaps to the dimension of the next, higher\/-\/order jet fibre. Through Condition~\ref{CondOrd1Full} (contrasted by Counterexample~\ref{CounterExNonZeroJac}), the Wronskians over multidimension $d>1$
--\,participating in the homotopy of \emph{unknown} Lie\/-\/algebraic 
object\,-- still reproduce the fact that over $d=1$, the original structure to\/-\/deform was the Lie algebra $\cX^1(\BBR^1)$ of vector fields on the line, whence the wedge $\mathbf{1}\wedge \partial/\partial x$ (from the commutator of vector fields on $\mathbb{R}^1$) was seen in every Wronskian $W^{0,1,\ldots,N-1} = \mathbf{1}\wedge \partial_x\wedge\ldots\wedge\partial_x^{N-1}$. The deformation of $\cX^1(\BBR^1)$ ran over higher\/-\/order differential operators $w_j(x)\,\smash{\partial_x^{N/2}}$ for even $N=2p\in\BBN$ and over still\/-\/unrecognised objects for $N$ odd. The nature of deformation's higher\/-\/order terms over $d>1$ is not yet identified. The contrast of three new Theorems~\ref{ThJacobiTableFullInPartOut}--\ref{ThJacobiTablePartInPartOutInsuff} with Counterexample~\ref{CounterExNonZeroJac} (when Condition~\ref{CondOrd1Full} is violated) indicates the $(d+1)$-\/arity of the first\/-\/order `commutator' $W^{d>1}_{r=1}$ --\,for the objects on $\BBR^{d>1}$\,-- which undergoes the homotopy deformation.
An open problem is to describe the integral object for the algebra with the bracket built of $W^{d>1}_{r=1}$ and its homotopy by~$'W^{d>1}_{\underline{s>1}}$.%
\footnote{\label{FootWhatIsDeformedDimD} 
Over $d=1$, the Lie algebra $\cX^1(M^1)$ integrated to $\Diffeo(M^1)$ with associative composition~$\circ$;
the $L_\infty$-structure from \cite{ForKac,Dzhuma2002,LadaStasheff1993} integrated to an $A_\infty$-deformation of $\Diffeo(M^1)$ (note that $\circ$ is binary as $2=d+1$ for $M^1$). What is the $(d+1)$-ary analogue of the binary composition of diffeomorphisms\,?}

\subsection*{Acknowledgements}
The author thanks the organisers of the XIII International symposium on Quantum Theory and Symmetries -- QTS13 (Yerevan, Armenia, 28~July -- 1~August 2025) for a warm atmosphere during the meeting.
This work has been partially supported by the Bernoulli Institute (Groningen, NL) via project~110135.
The author thanks M.\,Kontsevich and V.\,Retakh for helpful discussions and advice.

\end{document}